\newtheorem{theorem}{theorem}[section]
\newtheorem{lem}[theorem]{Lemma}
\newtheorem{rmk}[theorem]{Remark}
\newtheorem{nota}[theorem]{Notation}
\begin{document}

\title{\textbf{Representation category of the quantum double of ${\rm SL}(2,q)$}}
\author{\Large Haimiao Chen
\footnote{Email: \emph{chenhm@math.pku.edu.cn}}\\
\normalsize \em{Mathematics, Beijing Technology and Business University, Beijing, China}}
\date{}
\maketitle

\begin{abstract}
  For each odd prime power $q$ with $q\ge 5$ and $4\mid q-1$, we investigate the structure of the representation category of the quantum double of ${\rm SL}(2,q)$, determining its tensor products and braidings.

  \medskip
  \noindent {\bf key words:} quantum double; representation category; ${\rm SL}(2,q)$; Dijkgraaf-Witten invariant; 3-manifold  \\
  {\bf MSC2010:} 16D90, 57M27, 57R56
\end{abstract}

\section{Introduction}

Let $\Gamma$ be a finite group. For a closed oriented 3-manifold $M$, the untwisted {\it Dikgraaf-Witten invariant} $Z_\Gamma(M)$ counts homomorphisms of $\pi_1(M)$ to $\Gamma$.
DW theory was first proposed in \cite{DW90} as a 3-dimensional topological quantum field theory (TQFT), and since then has been further
studied by many people; one may refer to \cite{Fe93, FQ93, Fr94, Mo10, Wa92, Ye92}, etc.

Recall \cite{BJ01,RT91} that, given an {\it modular tensor category} (MTC), one can associate
to each ``colored framed link" an invariant, and then use it to define a cobordism invariant, which fits in TQFT axioms.
From $\Gamma$ one can construct the {\it quantum double} $D(\Gamma)$, whose representation category $\mathcal{E}(\Gamma)$ is an MTC,
giving rise to a 3d TQFT $\textrm{RT}_{\Gamma}$. Freed \cite{Fr94-2} proved that $Z_{\Gamma}$ coincides with $\textrm{RT}_{\Gamma}$, by computing the DW invariant of the circle to be $\mathcal{E}(\Gamma)$ and applying Reshetikhin-Turaev Theorem.
This permits us to compute $Z_\Gamma(M)$ for a 3-manifold $M$, from a surgery presentation of $M$.

The fundamental group of a 3-manifold is no doubt important, but as a non-commutative object, it is usually difficult to handle. DW invariant extracts from fundamental group partial information which is manageable. Remarkably, in recent years, many deep connections between 3-manifolds
and finite groups have been revealed (see \cite{BF15,Ko13,LR10,Wi17,WZ17} and the references therein).
Thus it is worth investigating the MTC $\mathcal{E}(\Gamma)$ for various finite groups $\Gamma$. However, till now there is no such work
seen in the literature.

In this paper, we consider the case $\Gamma={\rm SL}(2,q)$, where $q$ is an odd prime power with $q\ge 5$ and $4\mid q-1$.
The motivation for this choice is two-fold.
Firstly, it is Weil's great insight that, if a variety $X$ can be defined both over $\mathbb{C}$ and $\mathbb{F}_q$, then the behavior of $\#X(\mathbb{F}_q)$ for various $q$ ``remembers" the topology of $X(\mathbb{C})$. In particular,
for a finitely generated group $\Lambda$ and a linear algebraic group $G$ (which can be ${\rm SL}_n$, ${\rm PGL}_n$, etc), topological
properties of the $G(\mathbb{C})$-representation variety of $\Lambda$ are encoded in the numbers $\#\hom(\Lambda,G(\mathbb{F}_q))$. More than expected is established in \cite{BH17} that the E-polynomials of the ${\rm GL}(n,\mathbb{C})$- and ${\rm SL}(n,\mathbb{C})$-character varieties of $\Lambda$ can be computed by counting representations over $\mathbb{F}_q$. Works concerning Fuchsian groups are \cite{LS05, HR08}; zeta-functions of some knot groups were computed in \cite{Si00,Ha11,Ha14}, and \cite{LX03,LX04} were also in this vein. Secondly, for a
hyperbolic 3-manifold, its fundamental group is closely related to ${\rm SL}(2,q)$; see, for instance, \cite{Ha15,LR98,GTM219}.
Significantly, it was discovered in \cite{Ha15} that the Hasse-Weil zeta function of the canonical components of the ${\rm PSL}_2$-character variety of a closed hyperbolic 3-manifold of finite volume is equal to the Dedekind zeta function of its invariant trace field.



A key feature of ${\rm SL}(2,q)$ is that the centralizer of each non-central element is abelian. This makes $\mathcal{E}({\rm SL}(2,q))$ relatively
easy to understand, and enables us to derive explicit formulas for the decomposition of tensor products of irreducible representations.
Based on these, we can compute the braidings.


We expect these results to be foundational for future researches concerning interplay between arithmetic and hyperbolic 3-manifold.

The content is organized as follows. In Section 2 we recall the construction of quantum double $D(\Gamma)$ and the MTC structure of its representation category $\mathcal{E}(\Gamma)$, for a general finite group $\Gamma$. In Section 3 we give details to ${\rm SL}(2,q)$ and simple objects of $\mathcal{E}({\rm SL}(2,q))$. The main achievements of this paper are presented in Section 4, where we derive explicit formulas
for the tensor products and the braidings. 

\medskip

{\bf Acknowledgement}

I am grateful to D.S. Freed for beneficial communications on Dijkgraaf-Witten theory, and to Shenghao Sun (at THU) for elaborating Weil Conjecture.

\section{Preliminary} 

\begin{nota}
\rm For elements $x,y$ of some group, let $[x]$ denote the conjugacy class containing $x$, let ${\rm Cen}(x)$ denote the
centralizer of $x$, and let $y.x=yxy^{-1}$.

For a positive integer $m$, let $\mathbb{Z}_m$ denote $\mathbb{Z}/m\mathbb{Z}$.

For a finite field $\mathbb{F}$, let $\mathbb{F}^{\times}$ denote the set of nonzero elements, which forms a cyclic group under multiplication; let $\mathbb{F}^{\times2}$ denote the set of nonzero squares.

Given positive integers $a,b$ with $a<b$, let $[a,b]=\{a,a+1,\ldots,b\}$.
\end{nota}


For this section, one may refer to \cite{BJ01} Section 3.2 and \cite{Fr94}.
Given a finite group $\Gamma$, the quantum double $D(\Gamma)$ is a {\it quasi-triangular} Hopf algebra with underlying vector space spanned by $\langle g,x\rangle, g,x\in\Gamma$, and equipped with the following operations:
\begin{itemize}
  \item multiplication: $\langle g,x\rangle\langle h,y\rangle=\delta_{g,x.h}\langle g,xy\rangle$;
  \item unit: $1=\sum_{g\in\Gamma}\langle g,e\rangle$;
  \item comultiplication: $\langle g,x\rangle\mapsto \sum_{g_1g_2=g}\langle g_1,x\rangle\otimes\langle g_2,x\rangle$;
  \item counit: $\langle g,x\rangle\mapsto\delta_{g,e}$;
  \item antipode: $\langle g,x\rangle\mapsto\langle x^{-1}.g^{-1},x^{-1}\rangle$;
  \item R-matrix: $R=\sum_{g,h\in\Gamma}\langle g,e\rangle\otimes\langle h,g\rangle$.
\end{itemize}

Let $\mathcal{E}(\Gamma)$ be the category of finite-dimensional representations of $D(\Gamma)$.
Each object can be identified with a $\Gamma$-graded vector space $U=\bigoplus_{x\in\Gamma}U_{x}$
together with a left action $L^U:\Gamma\times U\to U$ such that $L^U_g(U_{x})=U_{g.x}$ for all $g,x$. A morphism $f:U\rightarrow V$ is a family of linear maps $f=\{f_{x}:U_{x}\rightarrow V_{x}\colon x\in\Gamma\}$
such that $L^U_g\circ f_{x}=f_{g.x}\circ L^U_g$ for all $g,x$.

Here are main ingredients of the MTC structure of $\mathcal{E}(\Gamma)$:
\begin{itemize}
  \item The {\it tensor product}, denoted by $\odot$, is given by
        $$(U\odot V)_{x}=\bigoplus\limits_{x_{1}x_{2}=x}U_{x_{1}}\otimes V_{x_{2}}.$$
        The {\it unit object} is $\underline{\mathbb{C}}$ with
        $\underline{\mathbb{C}}_{1}=\mathbb{C}$ and $\underline{\mathbb{C}}_{x}=0$ for all $x\neq 1$.
  \item The {\it associator} $(U\odot V)\odot W\rightarrow U\odot(V\odot W)$ is given by the natural isomorphism of vector spaces
        $$(U_{x}\otimes V_{y})\otimes W_{z}\cong U_{x}\otimes(V_{y}\otimes W_{z}).$$
  \item The {\it braiding} $R_{U,V}:U\odot V\rightarrow V\odot U$ sends $u\otimes v\in U_x\otimes V_y$ to $L^V_x(v)\otimes u\in V_{x.y}\otimes U_x$
        for all $u\in U_{x}, v\in V_{y}$.
  \item The {\it dual} is given by $(U^{\ast})_{x}=(U_{x^{-1}})^\ast$ (the dual vector space), with
        $L^{U^\ast}_g:(U^{\ast})_{x}\rightarrow (U^{\ast})_{g.x}$ given by the dual map of $g^{-1}:U_{g.x^{-1}}\rightarrow U_{x^{-1}}$. In addition, there are natural morphisms $\iota_{U}:\underline{\mathbb{C}}\rightarrow U\odot U^{\ast}$ and $\epsilon_{U}:U^{\ast}\odot U\rightarrow\underline{\mathbb{C}}$, which respectively can be expressed as
        \begin{align}
        \iota_{U}(1)=\sum\limits_{x}\sum\limits_{a}u_{x,a}\otimes u_{x,a}^\ast,  \qquad
        \epsilon_{U}(u_{x,a}^\ast\otimes u_{y,b})=\delta_{x,y}\delta_{a,b},
        \end{align}
        where $\{u_{x,a}\}$ is an arbitrary basis of $U_{x}$, and $\{u_{x,a}^\ast\}$ is the corresponding dual basis of $(U_{x})^\ast=U^\ast_{x^{-1}}$.
\end{itemize}

For $U\in\mathcal{E}(\Gamma)$, call $\{x\in\Gamma\colon U_x\ne 0\}$ the {\it support} of $U$ and denote it by ${\rm supp}(U)$;
the {\it character} of $U$ is the function
\begin{align*}
\chi_U:\Gamma\times\Gamma\to\mathbb{C}, \qquad
(x,h)\mapsto\begin{cases} 0,&h\notin{\rm Cen}(x), \\ {\rm tr}(h:U_x\to U_{x}),&h\in{\rm Cen}(x). \end{cases}
\end{align*}
and the {\it quantum dimension} of $U$ is defined as
\begin{align*}
d_{U}=\sum\limits_{x\in\Gamma}\chi_{U}(x,1).
\end{align*}
The character satisfies the following:
\begin{enumerate}
  \item[\rm(i)] $U\cong U'$ if and only if $\chi_U=\chi_{U'}$;
  \item[\rm(ii)] $\chi_U(g.x,g.h)=\chi_U(x,h)$ for all $g$;
  \item[\rm(iii)] $\chi_{U^\ast}(x,h)=\chi_{U}(x^{-1},h^{-1})$ for all $x,h$.
\end{enumerate}

Let $S$ be a system of representatives for conjugacy classes of $\Gamma$.
For each $x\in S$, let $\mathcal{R}_x$ be a system of representatives for isomorphism classes of irreducible representations
of $C(x)$, and let $R_x\subset\Gamma$ be a system of representatives for left cosets of $C(x)$. For $\rho\in\mathcal{R}_x$ with
representation space $U(\rho)$, let $U(x,\rho)$ be the object of $\mathcal{E}(\Gamma)$ with
\begin{align*}
U(x,\rho)_y=\begin{cases} U(\rho),&y\in[x], \\ 0, &y\notin[x]; \end{cases}
\end{align*}
for each $g\in\Gamma$ and each $a\in R_x$, take $a'\in R_x$ and $b\in{\rm Cen}(x)$ such that $ga=a'b$, and define
$L_g^{U(x,\rho)}:U(x,\rho)_{a.x}\to U(x,\rho)_{a'.x}$ by $\rho(b)$. In particular, $U(1,\rho)$ is the same as an irreducible representation of $\Gamma$.

It is known that each simple object of $\mathcal{E}(\Gamma)$ is isomorphic to $U(x,\rho)$ for a unique pair $(x,\rho)$ with $x\in S$ and $\rho\in\mathcal{R}_x$.

For $U=U(x,\rho)$, put
\begin{align}
\theta_{U}={\rm tr}(\rho(x)).
\end{align}

\section{The category $\mathcal{E}({\rm SL}(2,q))$}

\subsection{Preparation}

Let $q=p^n$ with $p$ an odd prime and $n\ge 1$. Suppose $q\equiv 1\pmod{4}$; write $q=4h+1$.

For $j\in\{1,2\}$, let $\overline{j}=3-j$.

For $\mu\in\{+,-\}$, $k\in[1,2h-1]$ and $\ell\in[1,2h]$, set
\begin{align*}
&\overline{\mu}=-, &&\acute{\mu}=0, && k_\mu=k, && \ell^\mu=\ell, && \text{if} \quad \mu=+; \\
&\overline{\mu}=+, &&\acute{\mu}=1, && k_\mu=2h-k, && \ell^\mu=2h+1-\ell, &&\text{if} \quad \mu=-.
\end{align*}

As is well-known,
\begin{align*}
\#{\rm SL}(2,\mathbb{F}_q)=q(q^{2}-1).
\end{align*}

Fix a generator $\check{e}$ of $\mathbb{F}_{q^2}^{\times}$, so that $\check{e}^{(q^2-1)/2}=-1$.
Let
\begin{align}
\tilde{e}=\check{e}^{2h+1}, \qquad e=\tilde{e}^2, \qquad \tilde{f}=\check{e}^{2h}, \qquad f=\tilde{f}^2.
\end{align}
Then $e$ is a generator of $\mathbb{F}_{q}^{\times}$.
We have
$$\mathbb{F}_{q^{2}}=\{a+b\tilde{e}\colon a,b\in\mathbb{F}_{q}\}.$$
For $x=a+b\tilde{e}$, its {\it conjugate} is $\overline{x}:=a-b\tilde{e}=x^q$, and its {\it norm} is
\begin{align*}
\mathcal{N}(x)=x\overline{x}=x^{q+1}=a^2-b^2e.
\end{align*}
Let ${\rm tr}:\mathbb{F}_q\to\mathbb{F}_p$ is the trace. See Section 8 of \cite{Mo96} for definitions. It is known that $\mathcal{N}:\mathbb{F}_{q^{2}}^{\times}\to\mathbb{F}_{q}^{\times}$ is an epimorphism of groups, with kernel generated by $f$, and ${\rm tr}$ is surjective.

For $k,\ell\in\mathbb{Z}$, put
\begin{align}
r_k=\frac{1}{2}(e^k+e^{-k}), \qquad r'_k&=\frac{1}{2}(e^k-e^{-k}), \\
\tilde{s}_\ell=\frac{1}{2}(\tilde{f}^\ell+\tilde{f}^{-\ell}), \qquad \tilde{s}'_\ell&=\frac{1}{2}(\tilde{f}^\ell-\tilde{f}^{-\ell}), \\
s_\ell=\frac{1}{2}(f^\ell+f^{-\ell}), \qquad s'_\ell&=\frac{1}{2}(f^\ell-f^{-\ell}).
\end{align}

For $x\in\mathbb{F}_{q^2}^\times$, $y\in\mathbb{F}_q$, put
\begin{align}
\mathbf{a}(x)=\left(\begin{array}{cc} x & 0 \\ 0 & x^{-1} \\ \end{array} \right), \qquad
\mathbf{b}(y)=\left(\begin{array}{cc} 1 & y \\ 0 & 1 \\ \end{array} \right).
\end{align}
Let
\begin{align}
\mathbf{e}=\mathbf{a}(1), \qquad \mathbf{a}=\mathbf{a}(e), \qquad \tilde{\mathbf{a}}=\mathbf{a}(\tilde{e}), \qquad
\mathbf{b}_{\varepsilon}=\mathbf{b}(e^{\acute{\varepsilon}}), \quad \varepsilon\in\{+,-\};
\end{align}
also, introduce
\begin{align}
\mathbf{c}=\left(\begin{array}{cc} s_1 & s'_1\tilde{e}^{-1} \\ s'_1\tilde{e} & s_1 \\ \end{array} \right), \qquad
\tilde{\mathbf{c}}=\left(\begin{array}{cc} \tilde{s}_1 & \tilde{s}'_1\tilde{e}^{-1} \\ \tilde{s}'_1\tilde{e} & \tilde{s}_1 \\ \end{array} \right),  \qquad
\mathbf{j}=\left(\begin{array}{cc} 0 & 1 \\ -1 & 0 \\ \end{array} \right).
\end{align}
According to \cite{FH50} Section 5.2, each element of ${\rm SL}(2,q)$ different from $\pm\mathbf{e}$ is conjugate to $\mathbf{a}^{k}$ with $k\not\equiv 0\pmod{2h}$, or $\mu\mathbf{b}_{\varepsilon}$ with $\mu,\varepsilon\in\{+,-\}$, or $\mathbf{c}^{\ell}$ with $\ell\not\equiv0\pmod{2h+1}$.
Furthermore, $\mathbf{a}^{k'}$ is conjugate to $\mathbf{a}^{k}$ if and only if $k'\equiv\pm k\pmod{q-1}$, and $\mathbf{c}^{\ell'}$ is conjugate to $\mathbf{c}^{\ell}$ if and only if $\ell'\equiv \pm\ell\pmod{q+1}$.



\subsection{Irreducible representations of ${\rm SL}(2,q)$}

The irreducible representations of ${\rm SL}(2,q)$ were given in \cite{FH50} (Page 71--73); they are (in the notation of \cite{FH50}) $\mathbf{1}$, $V$, $W_{\sigma}$ ($\sigma\in[1,2h-1]$),
$X_{\phi}$ ($\phi\in[1,2h]$), $W',W''$, $X',X''$. Here is the character table:
\begin{center}
\begin{tabular}{|c|c|c|c|c|}
  \hline
  \ & $\mu\mathbf{e}$ & $\mathbf{a}^k$ & $\mu\mathbf{b}_{\varepsilon}$ &  $\mathbf{c}^\ell$ \\
  \hline
  $\mathbf{1}$ & $1$ & $1$ & $1$ & $1$ \\
  \hline
  $V$ & $q$ & $1$ & $0$ & $-1$ \\
  \hline
  $W_{\sigma}$ & $(q+1)\mu^\sigma$ &  $\zeta_{q-1}^{\sigma k}+\zeta_{q-1}^{-\sigma k}$ & $\mu^\sigma$ & 0 \\
  \hline
  $X_{\phi}$ & $(q-1)\mu^\phi$ & 0 & $-\mu^\phi$ & $-\zeta_{q+1}^{\phi\ell}-\zeta_{q+1}^{-\phi\ell}$ \\
  \hline
  $W'$ & $2h+1$ &  $(-1)^k$ & $s_{\varepsilon}$ & 0 \\
  \hline
  $W''$ & $2h+1$ &  $(-1)^k$ & $s_{\overline{\varepsilon}}$ & 0 \\
  \hline
  $X'$ & $2h\mu$ &  0 & $\overline{\mu}s_{\varepsilon}$ & $(-1)^{\ell+1}$ \\
  \hline
  $X''$ & $2h\mu$  & 0 & $\overline{\mu}s_{\overline{\varepsilon}}$ & $(-1)^{\ell+1}$ \\
  \hline
\end{tabular}
\end{center}
where
\begin{align*}
s_{\pm}=\frac{1}{2}(1\pm\sqrt{q}).
\end{align*}

For $u\in\mathbb{Z}_{q-1}$, let $u^\ast$ denote the homomorphism
\begin{align}
\langle\mathbf{a}\rangle\cong\mathbb{Z}_{q-1}\to\mathbb{C}^\times, \qquad \mathbf{a}^{k}\mapsto\zeta_{q-1}^{ku};
\end{align}
for $v\in\mathbb{F}_q$, let $v^\ast$ denote the homomorphism
\begin{align}
\{\mathbf{b}(c)\colon c\in \mathbb{F}_q\}\cong\mathbb{F}_q\to\mathbb{C}^\times, \qquad \mathbf{b}(c)\mapsto\zeta_{p}^{{\rm tr}(vc)};
\end{align}
for $w\in\mathbb{Z}_{q+1}$, let $w^\ast$ denote the homomorphism
\begin{align}
\langle\mathbf{c}\rangle\cong\mathbb{Z}_{q+1}\to\mathbb{C}^\times, \qquad \mathbf{c}^{\ell}\mapsto\zeta_{q+1}^{\ell w}.
\end{align}

As a representation of $\langle\mathbf{a}\rangle\cong\mathbb{Z}_{q-1}$,
\begin{align}
V\cong \oplus^30^\ast\oplus\bigoplus\limits_{2\mid u\ne 0}\oplus^2u^\ast\cong\mathbb{C}\langle\xi^{\mathbf{a}}_{0,1},\xi^{\mathbf{a}}_{0,2},\xi^{\mathbf{a}}_{0,3}\rangle\oplus\bigoplus\limits_{2\mid u\ne 0}\mathbb{C}\langle\xi^{\mathbf{a}}_{u,1},\xi^{\mathbf{a}}_{u,2}\rangle,
\end{align}
where the $\xi^{\mathbf{a}}_{u,j}$ are $\zeta_{q-1}^{u}$-eigenvectors of $\mathbf{a}$.
As a representation of $\{\mathbf{b}(b)\colon b\in\mathbb{F}_q\}\cong\mathbb{F}_{q}$ or of $\langle\mathbf{c}\rangle$, there is a similar decomposition.

Such decompositions for all the irreducible representations are tabulated as below. 
This viewpoint plays a key role in explicit decompositions of tensor products, as done in the final part of the next section.

\begin{center}
\begin{tabular}{|c|c|c|c|}
  \hline
  \ & $\langle\mathbf{a}\rangle$ & $\{\mathbf{b}(b)\colon b\in\mathbb{F}_q\}$ & $\langle\mathbf{c}\rangle$  \\
  \hline
  $V$ & $\oplus^30^\ast\oplus\bigoplus\limits_{2\mid u\ne 0}\oplus^2u^\ast$ & $\bigoplus\limits_{v\in\mathbb{F}_q}v^\ast$ & $0^\ast\oplus\bigoplus\limits_{2\mid w\ne 0}\oplus^2w^\ast$ \\
  \hline
  $W_{\sigma}$ & $\sigma^{\ast}\oplus(-\sigma)^\ast\oplus\bigoplus\limits_{2\mid u-\sigma}\oplus^2u^\ast$ & $2\cdot0^{\ast}\oplus\bigoplus\limits_{v\in\mathbb{F}_q^\times}v^\ast$ &
  $\bigoplus\limits_{2\mid w-\sigma}\oplus^2w^\ast$  \\
  \hline
  $X_{\phi}$ & $\bigoplus\limits_{2\mid u-\phi}\oplus^2u^\ast$ & $\bigoplus\limits_{v\in\mathbb{F}_q}v^\ast$ & $\bigoplus\limits_{2\mid w-\phi, w\ne\pm\phi}\oplus^2w^\ast$  \\
  \hline
  $W'$ & $(2h)^{\ast}\oplus\bigoplus\limits_{2\mid u}u^\ast$ & $0^\ast\oplus\bigoplus\limits_{v\in\mathbb{F}_q^{\times 2}}v^\ast$ & $\bigoplus\limits_{2\mid w}w^\ast$  \\
  \hline
  $W''$ & $(2h)^{\ast}\oplus\bigoplus\limits_{2\mid u}u^\ast$ & $0^\ast\oplus\bigoplus\limits_{v\in\mathbb{F}_q^{\times}-\mathbb{F}_q^{\times 2}}v^\ast$ & $\bigoplus\limits_{2\mid w}w^\ast$ \\
  \hline
  $X'$ & $\bigoplus\limits_{2\nmid u}u^\ast$ & $\bigoplus\limits_{v\in\mathbb{F}_q^{\times 2}}v^\ast$ & $\bigoplus\limits_{2\nmid w,w\ne 2h+1}w^\ast$ \\
  \hline
  $X''$ & $\bigoplus\limits_{2\nmid u}u^\ast$ & $\bigoplus\limits_{v\in\mathbb{F}_q^{\times}-\mathbb{F}_q^{\times 2}}v^\ast$ & $\bigoplus\limits_{2\nmid w, w\ne 2h+1}w^\ast$ \\
  \hline
\end{tabular}
\end{center}

\subsection{Simple objects of $\mathcal{E}({\rm SL}(2,q))$ with nontrivial supports}

For $u\in\mathbb{Z}_{q-1}$, put
\begin{align}
A_k^{u}=U(\mathbf{a}^k,u^\ast).
\end{align}

Note that
\begin{align}
[\mu\mathbf{b}_{\varepsilon}]=
\left\{\mu\left(\begin{array}{cc} 1-e^{\acute{\varepsilon}}ac & e^{\acute{\varepsilon}}a^2
\\ -e^{\acute{\varepsilon}}c^2 & 1+e^{\acute{\varepsilon}}ac \end{array}\right)
\colon (a,c)\in\mathbb{F}_q^2-\{(0,0)\}\right\}.  \label{eq:[b]}
\end{align}
For $\nu\in\{+,-\}$ and $v\in\mathbb{F}_q$, let $(\nu,v)^\ast$ denote the homomorphism
$${\rm Cen}(\mu\mathbf{b}_\varepsilon)\to\mathbb{C}^\times, \qquad $$
and put
\begin{align}
B_{\mu,\varepsilon}^{\nu,v}=U(\mu\mathbf{b}_{\varepsilon},(\nu,v)^\ast).  
\end{align}

For $w\in\mathbb{Z}_{q+1}$, put
\begin{align}
C_{\ell}^w=U(\mathbf{c}^\ell,w^\ast).
\end{align}

Summarized as follows:
\begin{center}
\begin{tabular}{|c|c|c|c|c|}
  \hline
  $U$ & ${\rm supp}(U)$ & ranges\ of\ parameters & $\theta_U$ & $d_U$ \\
  \hline
  $A_k^u$ & $[\mathbf{a}^k]$ & $k\in[1,2h-1], u\in[1,q-1]$  & $\zeta_{q-1}^{ku}$ & $q(q+1)$ \\
  \hline
  $B_{\mu,\varepsilon}^{\nu,v}$ & $[\mu\mathbf{b}_\varepsilon]$ & $\varepsilon,\mu,\nu\in\{+,-\}, v\in\mathbb{F}_q$
  & $\mu\nu\zeta_p^{{\rm tr}(e^{\acute{\varepsilon}}v)}$ & $(q^2-1)/2$ \\
  \hline
  $C_\ell^w$ & $[\mathbf{c}^\ell]$ & $\ell\in[1,2h], w\in[1,q+1]$ & $\zeta_{q+1}^{\ell w}$ & $q(q-1)$ \\
  \hline
\end{tabular}
\end{center}

Each simple object is self-dual.
Indeed, we have isomorphisms
\begin{align*}
&(A_k^{u})_{\mathbf{a}^k}\cong((A_k^{u})^\ast)_{\mathbf{a}^k}=((A_k^{u})_{\mathbf{a}^{-k}})^\ast, && \mathbf{a}^k\mapsto (\mathbf{a}^{-k}\mapsto 1), \\
&(B_{\mu,\varepsilon}^{\nu,v})_{\mu\mathbf{b}_{\varepsilon}}\cong ((B_{\mu,\varepsilon}^{\nu,v})^\ast)_{\mu\mathbf{b}_{\varepsilon}}=((B_{\mu,\varepsilon}^{\nu,v})_{\mu\mathbf{b}_{\varepsilon}})^\ast,
&&\mu\mathbf{b}_{\varepsilon}\mapsto ((\mu\mathbf{b}_{\varepsilon})^{-1}\mapsto 1), \\
&(C_\ell^{w})_{\mathbf{c}^\ell}\cong((C_\ell^{w})^\ast)_{\mathbf{c}^\ell}=((C_\ell^{w})_{\mathbf{c}^{-\ell}})^\ast, && \mathbf{c}^\ell\mapsto (\mathbf{c}^{-\ell}\mapsto 1).
\end{align*}

For $X=U(\mathbf{x},\rho)$, if $\rho(-\mathbf{e})=1$ (resp. $\rho(-\mathbf{e})=-1$), then we say that the {\it parity} of $X$ is even (resp. odd), and write $\nu(X)=+$ (resp. $\nu(X)=-$).

\section{Tensor products and braidings}

Let
\begin{align}
\mathbf{k}=\left(\begin{array}{cc} 1 & -\tilde{e}^{-1} \\ \tilde{e} & 1 \end{array}\right).
\end{align}
Note that $\mathbf{c}=\mathbf{k}.\mathbf{a}(f)$, so $\mathbf{c}^\ell=\mathbf{k}.\mathbf{a}(f^\ell)$.

\begin{lem} \label{lem:real}
For $\mathbf{u}=\left(\begin{array}{cc} a & b \\ c & d  \end{array}\right)\in{\rm SL}(2,q^2)$, $\mathbf{k}.\mathbf{u}\in{\rm SL}(2,q)$
if and only if $d=\overline{a}$, $c=-e\overline{b}$ (so that $\mathcal{N}(a)+e\mathcal{N}(b)=1$).
\end{lem}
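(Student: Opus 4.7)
The plan is to translate $\mathbf{k}.\mathbf{u}\in{\rm SL}(2,q)$ into Frobenius-invariance and then to move the Frobenius past the conjugation by $\mathbf{k}$. Writing $\bar x=x^q$ for the nontrivial element of ${\rm Gal}(\mathbb{F}_{q^2}/\mathbb{F}_q)$, the desired condition $\overline{\mathbf{k}\mathbf{u}\mathbf{k}^{-1}}=\mathbf{k}\mathbf{u}\mathbf{k}^{-1}$ rearranges (using $\overline{\mathbf{k}^{-1}}=\overline{\mathbf{k}}^{-1}$) to $N\overline{\mathbf{u}}N^{-1}=\mathbf{u}$, where $N:=\mathbf{k}^{-1}\overline{\mathbf{k}}$. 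Thus the task reduces to computing $N$ once and then reading off what $N\overline{\mathbf{u}}N^{-1}=\mathbf{u}$ says entry by entry.

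To compute $N$, first note $\overline{\tilde e}=-\tilde e$ (since $\tilde e^2=e\in\mathbb{F}_q$ but $\tilde e\notin\mathbb{F}_q$), which gives $\overline{\mathbf{k}}=\bigl(\begin{smallmatrix}1 & \tilde e^{-1}\\ -\tilde e & 1\end{smallmatrix}\bigr)$. Since $\det\mathbf{k}=2$ is invertible, $\mathbf{k}^{-1}=\frac{1}{2}\overline{\mathbf{k}}$, so $N=\frac{1}{2}\overline{\mathbf{k}}^{\,2}$ comes out pleasantly antidiagonal:
\[
N=\begin{pmatrix}0 & \tilde e^{-1}\\ -\tilde e & 0\end{pmatrix}.
\]
Conjugation by an antidiagonal matrix swaps the two diagonal entries of $\overline{\mathbf{u}}$ and rescales the two off-diagonal entries, so expanding $N\overline{\mathbf{u}}N^{-1}$ yields $\bigl(\begin{smallmatrix}\bar d & -e^{-1}\bar c\\ -e\bar b & \bar a\end{smallmatrix}\bigr)$. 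Equating this with $\mathbf{u}$ produces four equations $a=\bar d$, $b=-e^{-1}\bar c$, $c=-e\bar b$, $d=\bar a$; applying the Frobenius to the first shows it is equivalent to the fourth, and likewise the second to the third, so only $d=\bar a$ and $c=-e\bar b$ survive.

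Finally, substituting $d=\bar a$ and $c=-e\bar b$ into $\det\mathbf{u}=ad-bc=1$ produces $a\bar a-b(-e\bar b)=\mathcal{N}(a)+e\mathcal{N}(b)=1$, which is exactly the parenthetical identity. There is no real obstacle here: the only arithmetic is the identification $N=\frac{1}{2}\overline{\mathbf{k}}^{\,2}$ as an antidiagonal matrix, after which everything is $2\times2$ bookkeeping. A direct alternative would be to expand $\mathbf{k}\mathbf{u}\mathbf{k}^{-1}$ and impose $\bar{}\,$-invariance on each of its four entries, but that route generates the same conditions through a messier intermediate expression, which is why the $N$-approach is cleaner.
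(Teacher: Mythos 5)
Your proof is correct, and it takes a genuinely different route from the paper. The paper simply expands $\mathbf{k}.\mathbf{u}$ entrywise, obtaining the explicit formula (\ref{eq:k.u}), and then reads off when each entry lies in $\mathbb{F}_q$ (equivalently, when $a+d$, $be-c$, $(a-d)\tilde{e}$, $b\tilde{e}+c\tilde{e}^{-1}\in\mathbb{F}_q$), which reduces to $d=\overline{a}$, $c=-e\overline{b}$. You instead use Galois descent: a matrix over $\mathbb{F}_{q^2}$ lies in ${\rm SL}(2,q)$ iff it is fixed by the entrywise Frobenius, and the fixedness of $\mathbf{k}\mathbf{u}\mathbf{k}^{-1}$ is equivalent to $N\overline{\mathbf{u}}N^{-1}=\mathbf{u}$ with $N=\mathbf{k}^{-1}\overline{\mathbf{k}}$. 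Your computations check out: $\overline{\tilde e}=-\tilde e$, $\det\mathbf{k}=2$, $\mathbf{k}^{-1}=\tfrac12\overline{\mathbf{k}}$, hence $N=\bigl(\begin{smallmatrix}0&\tilde e^{-1}\\-\tilde e&0\end{smallmatrix}\bigr)$, and $N\overline{\mathbf{u}}N^{-1}=\bigl(\begin{smallmatrix}\overline{d}&-e^{-1}\overline{c}\\-e\overline{b}&\overline{a}\end{smallmatrix}\bigr)$, giving exactly the two independent conditions after noting the Frobenius pairs them up; the determinant then yields $\mathcal{N}(a)+e\mathcal{N}(b)=1$. What your approach buys is conceptual cleanliness: no messy intermediate expression, and the mechanism (twisted Frobenius-invariance, i.e.\ $\mathbf{k}$ conjugates a unitary-type condition into rationality) is transparent. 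What the paper's approach buys is the explicit formula (\ref{eq:k.u}) itself, which is not just scaffolding for the lemma: it is reused later (in the analysis of $X_{[\mathbf{c}^\ell]}$, to decide when $\mathbf{h}_j(i)$ lies in a class $[\mu_j\mathbf{b}_{\varepsilon_j}]$), so the direct expansion does double duty. Either argument is complete and correct.
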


\begin{proof}
Computing directly,
\begin{align}
\mathbf{k}.\mathbf{u}=\frac{1}{2e}\left(\begin{array}{cc} (a+d)e-(be+c)\tilde{e} & (a-d)\tilde{e}+(be-c) \\
(a-d)e\tilde{e}-(be-c)e & (a+d)e+(be+c)\tilde{e} \end{array}\right).  \label{eq:k.u}
\end{align}
Clearly, $\mathbf{k}.\mathbf{u}\in{\rm SL}(2,q)$ if and only if $a+d$, $be-c$, $(a-d)\tilde{e}$ and $b\tilde{e}+c\tilde{e}^{-1}$
are all in $\mathbb{F}_q$, which is equivalent to $d=\overline{a}$, $c=-e\overline{b}$.
\end{proof}

\begin{lem} \label{lem:multiply}
Let $\mathbf{y}_1,\mathbf{y}_2\in{\rm SL}(2,q^2)$ with ${\rm tr}(\mathbf{y}_j)=t_j, j=1,2$.
\begin{enumerate}
  \item[\rm(i)] For any $a\in\mathbb{F}_{q^2}^{\times}-\{\pm1\}$, $\mathbf{y}_1\mathbf{y}_2=\mathbf{a}(a)$ if and only if
       \begin{align*}
       \mathbf{y}_1&=\frac{1}{a-a^{-1}}\left(\begin{array}{cc} at_1-t_2 & -ay \\ -a^{-1}z & t_2-a^{-1}t_1 \end{array}\right),  \\
       \mathbf{y}_2&=\frac{1}{a-a^{-1}}\left(\begin{array}{cc} at_2-t_1 & y \\ z & t_1-a^{-1}t_2 \end{array}\right)
       \end{align*}
       for some $b, c$ with $bc=(a+a^{-1})t_1t_2-t_1^2-t_2^2-(a-a^{-1})^2$.
  \item[\rm(ii)] For $\mu\in\{\pm 1\}$ and $b\in\mathbb{F}_q^\times$, $\mathbf{y}_1\mathbf{y}_2=\mu\mathbf{b}(b)$ if and only if
       \begin{align*}
       \mathbf{y}_1&=\left(\begin{array}{cc} t_1-\mu x & \mu(bx-y) \\ (t_1-\mu t_2)b^{-1} & \mu x \end{array}\right),  \\
       \mathbf{y}_2&=\left(\begin{array}{cc} x & y \\ (t_2-\mu t_1)b^{-1} & t_2-x \end{array}\right)
       \end{align*}
       for some $x,y$ with $x(t_2-x)=(t_2-\mu t_1)yb^{-1}+1$.
\end{enumerate}
\end{lem}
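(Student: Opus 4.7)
The plan is a direct parametrization. Because the product $\mathbf{y}_1\mathbf{y}_2=M$ is fixed and invertible, the factor $\mathbf{y}_2$ is determined by $\mathbf{y}_1$ through $\mathbf{y}_2=\mathbf{y}_1^{-1}M$ (equivalently, $\mathbf{y}_1=M\mathbf{y}_2^{-1}$). So I would write $\mathbf{y}_1=\left(\begin{smallmatrix}p & q\\ r & s\end{smallmatrix}\right)$ with $p+s=t_1$ and $ps-qr=1$, compute the other factor explicitly, and impose $\mathrm{tr}(\mathbf{y}_2)=t_2$ together with $\det=1$. In both parts $\det M=1$, so $\det\mathbf{y}_2=1$ follows automatically once $\det\mathbf{y}_1=1$.

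For part (i), with $M=\mathbf{a}(a)$ one gets $\mathbf{y}_2=\left(\begin{smallmatrix}as & -a^{-1}q\\ -ar & a^{-1}p\end{smallmatrix}\right)$, so the trace condition becomes $a^{-1}p+as=t_2$. Together with $p+s=t_1$ and the hypothesis $a^2\ne 1$ (which makes $a-a^{-1}$ invertible), this linear system has the unique solution $p=(at_1-t_2)/(a-a^{-1})$ and $s=(t_2-a^{-1}t_1)/(a-a^{-1})$. The off-diagonal entries $q,r$ remain free, and after renaming them as $q=-ay/(a-a^{-1})$ and $r=-a^{-1}z/(a-a^{-1})$ the matrix $\mathbf{y}_1$ takes the stated form and $\mathbf{y}_2=\mathbf{y}_1^{-1}\mathbf{a}(a)$ comes out as claimed. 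Expanding $\det\mathbf{y}_1=1$ then gives $(at_1-t_2)(t_2-a^{-1}t_1)-yz=(a-a^{-1})^2$, which simplifies to the displayed relation $yz=(a+a^{-1})t_1t_2-t_1^2-t_2^2-(a-a^{-1})^2$.

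For part (ii) the same idea works starting from $\mathbf{y}_2$: write $\mathbf{y}_2=\left(\begin{smallmatrix}x & y\\ z' & t_2-x\end{smallmatrix}\right)$ and compute $\mathbf{y}_1=\mu\mathbf{b}(b)\,\mathbf{y}_2^{-1}$. The trace condition $\mathrm{tr}(\mathbf{y}_1)=t_1$ reduces (using $\mu^2=1$) to $\mu t_2-\mu bz'=t_1$, forcing $z'=(t_2-\mu t_1)/b$; the matrix $\mathbf{y}_1$ then evaluates directly to the stated expression, with bottom-left entry $-\mu z'=(t_1-\mu t_2)/b$. The remaining freedom is in $(x,y)$, and $\det\mathbf{y}_2=1$ becomes $x(t_2-x)-y(t_2-\mu t_1)b^{-1}=1$, exactly the stated constraint. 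The only subtlety in either part is tracking the invertibility hypotheses ($a\ne\pm 1$ in (i), $b\in\mathbb{F}_q^\times$ in (ii)); beyond this the proof is bookkeeping, and I would simply record the computations above and note that the converse direction is immediate since one checks the displayed matrices multiply to $M$ by inspection.
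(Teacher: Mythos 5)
Your proof is correct and follows essentially the same route as the paper: express one factor as $M\mathbf{y}_2^{-1}$ (resp. $\mathbf{y}_1^{-1}M$), impose the two trace conditions to pin down the constrained entries, and let the determinant condition give the stated relation on the remaining free parameters (the paper's $b,c$ in (i) are just your $y,z$). The paper's own proof records exactly these entrywise relations and leaves the bookkeeping implicit, so your write-up simply supplies the details.
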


\begin{proof}
Suppose
$\mathbf{y}_j=\left(\begin{array}{cc} x_j & y_j \\ z_j & w_j \end{array}\right)$.
Then
\begin{enumerate}
  \item[\rm(i)] $\mathbf{y}_1=\mathbf{a}(a)\mathbf{y}_2^{-1}$ reads
       $$x_1=aw_2, \qquad y_1=-ay_2, \qquad z_1=-a^{-1}z_2, \qquad w_1=a^{-1}x_2,$$
       which together with ${\rm tr}(\mathbf{y}_j)=t_j$ implies the result.
  \item[\rm(ii)] $\mathbf{y}_1=\mathbf{b}(b)\mathbf{y}_2^{-1}$ reads
       $$\mu x_1=w_2-bz_2, \qquad \mu y_1=bx_2-y_2, \qquad \mu z_1=-z_2, \qquad \mu w_1=x_2,$$
       which together with ${\rm tr}(\mathbf{y}_j)=t_j$ implies the result.
\end{enumerate}
\end{proof}

\begin{rmk}
\rm Note that in (i), denoting $t=a+a^{-1}$ and $t_j=a_j+a_j^{-1}$,
\begin{align*}
bc&=-(t-a_1a_2-a_1^{-1}a_2^{-1})(t-a_1a_2^{-1}-a_1^{-1}a_2) \\
&=4+tt_1t_2-t^2-t_1^2-t_2^2=:\Delta(t,t_1,t_2),
\end{align*}
hence $bc=0$ if and only if $a=a_1^{\epsilon_1}a_2^{\epsilon_2}$ with $\epsilon_j\in\{\pm 1\}$.
\end{rmk}

Given two simple objects $X_1,X_2$, consider decomposing $X_1\odot X_2$ into a direct sum of simple objects.
Suppose ${\rm supp}(X_j)=[\mathbf{x}_j]$, $j=1,2$.

If $\mathbf{x}_1=\mu\mathbf{e}$, then
$$X_1\odot X_2\cong\bigoplus\limits_{\rho\in\mathcal{R}_{\mathbf{x}_2}}\oplus^{n_\rho}U(\mathbf{x}_2,\rho).$$
We may fix an isomorphism, and define
$$X_2\odot X_1\cong\bigoplus\limits_{\rho\in\mathcal{R}_{\mathbf{x}_2}}\oplus^{n_\rho}U(\mathbf{x}_2,\rho)$$
by post-composing the switching map $X_2\odot X_1\to X_1\odot X_2$.
Then
\begin{align*}
R_{X_1,X_2}=\bigoplus\limits_{\rho\in\mathcal{R}_{\mathbf{x}_2}}I(n_\rho),  \qquad
R_{X_2,X_1}=\bigoplus\limits_{\rho\in\mathcal{R}_{\mathbf{x}_2}}\theta_{U(\mathbf{x}_2,\rho)}\cdot I(n_\rho).
\end{align*}
So it suffices to determined the multiplicities $n_\rho$. We omit the computations.

Suppose $\mathbf{x}_j\ne\pm\mathbf{e}$, $j=1,2$. Let $t_j=a_j+a_j^{-1}$, with
\begin{align}
a_j=\begin{cases}
e^{k'}, &\mathbf{x}_j=\mathbf{a}^{k'}, \  \ k'\in[1,2h-1], \\
\mu, &\mathbf{x}_j=\mu\mathbf{b}_{\varepsilon},  \  \mu,\varepsilon\in\{+,-\},  \\
f^{\ell'}, &\mathbf{x}_j=\mathbf{c}^{\ell'},  \ \ \ell'\in[1,2h].
\end{cases}
\end{align}
Let $\nu=+$ (resp. $\nu=-$) if the parities of $X_1,X_2$ are the same (resp. opposite).

According to supports, we can primarily decompose $X=X_1\odot X_2$ as
\begin{align}
X\cong\bigoplus\limits_{k=1}^{2h-1}X_{[\mathbf{a}^k]}\oplus \bigoplus\limits_{\mu,\varepsilon\in\{+,-\}}X_{[\mu\mathbf{b}_\varepsilon]}
\oplus\bigoplus\limits_{\ell=1}^{2h}X_{[\mathbf{c}^\ell]}\oplus X_{[\mathbf{e}]}\oplus X_{[-\mathbf{e}]}.
\end{align}
Denote the restriction of $R$ to $X_{[\mathbf{x}]}$ by $R_{[\mathbf{x}]}$.

\subsection{$X_{[\mathbf{a}^k]}$, $1\le k\le 2h-1$}

Let $\mathbb{G}$ denote the set of $u\in[1,q-1]$ with $u\equiv\acute{\nu}\pmod{2}$.

\subsubsection{$e^k\ne a_1^{\epsilon_1}a_2^{\epsilon_2}$}

For $i\in[1,,q-1]$, put
\begin{align}
\mathbf{f}_j(i)&=\frac{1}{e^k-e^{-k}}\left(\begin{array}{cc} e^kt_j-t_{\overline{j}} & e^{i} \\
\star & t_{\overline{j}}-e^{-k}t_j \end{array}\right), \qquad j=1,2,
\end{align}
where $\star\ne 0$ is determined by the condition $\det(\mathbf{f}_j(i))=1$; let
\begin{align}
\mathbf{f}(i)&=\mathbf{f}_1(2h+k+i)\otimes\mathbf{f}_2(i).
\end{align}
Suppose $\tilde{\mathbf{a}}^i:\mathbb{C}\langle\mathbf{f}_j(0)\rangle\to\mathbb{C}\langle\mathbf{f}_j(i)\rangle$ is given by
$\mathbf{f}_j(0)\mapsto\eta_j^\mathbf{a}(i)\mathbf{f}_j(i)$.

If $\mathbf{x}_j\ne\mu\mathbf{b}_\varepsilon$, $j=1,2$, then $X_{\mathbf{a}^k}=X^{\rm even}_{\mathbf{a}^k}\oplus X^{\rm odd}_{\mathbf{a}^k}$, with
\begin{align*}
X^{\rm even}_{\mathbf{a}^k}=\bigoplus\limits_{i'=1}^{2h}\mathbb{C}\langle\mathbf{f}(2i')\rangle, \qquad
X^{\rm odd}_{\mathbf{a}^k}=\bigoplus\limits_{i'=1}^{2h}\mathbb{C}\langle\mathbf{f}(2i'-1)\rangle.
\end{align*}
Clearly, $X^{\rm even}_{\mathbf{a}^k}$ is a representation of $\langle\mathbf{a}\rangle$, and the character vanishes at $\mathbf{a}^{k'}$ for all $k'$; similarly for $X^{\rm odd}_{\mathbf{a}^k}$. Hence, regarding of the action of $-\mathbf{e}$,
\begin{align}
X_{[\mathbf{a}^k]}\cong\bigoplus\limits_{u\in\mathbb{G}}\oplus^2A_k^{u};
\end{align}
the isomorphism can be given by the one determined by
\begin{align}
\mathbf{f}(i)\mapsto\bigoplus\limits_{u\in\mathbb{G}}\vartheta_u(i)\mathbf{z}(i),  \label{eq:iso-A1}
\end{align}
with
\begin{align}
\vartheta_u(i)=\frac{\zeta_{2(q-1)}^{iu}}{\eta^{\mathbf{a}}_1(2h+k+i)\eta^{\mathbf{a}}_2(i)}, \qquad
\mathbf{z}(i)=\begin{cases} (1,0),&2\mid i, \\ (0,1),&2\nmid i. \end{cases}
\end{align}
To compute $R_{[\mathbf{a}^k]}(\mathbf{f}(i))$, we need to know the action of $\mathbf{f}_1(2h+k+i)$ on $\mathbf{f}_2(i)$.
Since $\mathbf{f}_1(2h+k+i)=\mathbf{a}^k\mathbf{f}_2(i)^{-1}$, we have
\begin{align*}
R_{[\mathbf{a}^k]}(\mathbf{f}(i))=
\frac{\eta^{\mathbf{a}}_2(2k+i)}{\eta^{\mathbf{a}}_2(i)\theta_{X_2}}\cdot\mathbf{f}_2(2k+i)\otimes\mathbf{f}_1(2h+k+i)
\end{align*}
which is sent by (\ref{eq:iso-A1}) to
$$\bigoplus\limits_{u\in\mathbb{G}}\theta_{X_2}^{-1}\cdot\zeta_{2(q-1)}^{(k+2h)u}\vartheta_u(i)\cdot\mathbf{z}(k+i).$$
It follows that
\begin{align}
R_{[\mathbf{a}^k]}=\theta_{X_2}^{-1}\cdot\bigoplus\limits_{u\in\mathbb{G}}\zeta_{2(q-1)}^{(k+2h)u}J^{k},
\qquad \text{with} \qquad
J=\left(\begin{array}{cc} 0 & 1 \\ 1 & 0 \\ \end{array}\right).
\end{align}

\begin{rmk}
\rm Note that the result is not ``well-defined" with respect to the choice of $u$. This is due to our definition of
$\eta^{\mathbf{a}}_j(i)$, which could have been defined differently according to the parity of $i$.
\end{rmk}

If $\mathbf{x}_j=\mu_j\mathbf{b}_{\varepsilon_j}$ for at least one $j$, then
\begin{align}
X_{\mathbf{a}^k}=\bigoplus\limits_{i\in\mathbb{H}}\mathbb{C}\langle\mathbf{f}(i)\rangle,
\end{align}
where $\mathbb{H}$ is the set of $i\in[1,q-1]$ with (denoting $e^k-e^{-k}=e^{\check{k}}$)
$$i+\check{k}\equiv jk+\acute{\varepsilon_j}\pmod{2};$$
note that if $\mathbf{x}_1=\mu_1\mathbf{b}_{\varepsilon_1}$ and $\mathbf{x}_2=\mu_2\mathbf{b}_{\varepsilon_2}$, then $k\equiv \acute{\varepsilon_1}-\acute{\varepsilon_2}\pmod{2}$, so that $\mathbb{H}$ is still nonempty.
Hence
\begin{align}
X_{[\mathbf{a}^k]}\cong\bigoplus\limits_{u\in\mathbb{G}}A_k^{u},
\end{align}
the isomorphism determined by
\begin{align}
\mathbf{f}(i)\mapsto\bigoplus\limits_{u\in\mathbb{G}}\frac{\zeta_{2(q-1)}^{iu}}{\eta^{\mathbf{a}}_{1}(2h+k+i)\eta^{\mathbf{a}}_{2}(i)}.
\end{align}
Similarly as the previous part,
\begin{align}
R_{[\mathbf{a}^k]}=\theta_{X_2}^{-1}\cdot\bigoplus\limits_{u\in\mathbb{G}}\zeta_{2(q-1)}^{(k+2h)u}I.
\end{align}

\subsubsection{$e^k=a_1^{\epsilon_1}a_2^{\epsilon_2}$ with $\epsilon_j\in\{\pm 1\}$}

In this case, $a_j=e^{k_j}, j=1,2$, with $\epsilon_1k_1+\epsilon_2k_2=k\pmod{q-1}$. It is possible that
$k_j\in\{0,2h\}$ for some $j$, in which case $\mathbf{x}_j=\mu_j\mathbf{b}_{\varepsilon_j}$.

For $i\in[1,q-1]$, put
\begin{align}
\mathbf{f}^+_j(i)&=\left(\begin{array}{cc} e^{\epsilon_jk_j} & e^i \\ 0 & e^{-\epsilon_jk_j} \end{array}\right), \quad
\mathbf{f}^-_j(i)=\left(\begin{array}{cc} e^{\epsilon_jk_j} & 0 \\ e^{-i} & e^{-\epsilon_jk_j} \end{array}\right), \quad j=1,2, \\
\mathbf{f}^\psi(i)&=\mathbf{f}^\psi_{1}(2h+k+i)\otimes\mathbf{f}^\psi_{2}(i), \qquad \psi\in\{+,-\}.
\end{align}
Suppose $\tilde{\mathbf{a}}^i:\mathbb{C}\langle\mathbf{f}^\psi_j(0)\rangle\to\mathbb{C}\langle\mathbf{f}^\psi_j(i)\rangle$ is given by
$\mathbf{f}^\psi_j(0)\mapsto\eta^{\mathbf{a},\psi}_{j}(i)\cdot\mathbf{f}^\psi_j(i)$.

If $X_1=A_{k_1}^{u_1}$, $X_2=A_{k_2}^{u_2}$, then
\begin{align}
X_{\mathbf{a}^k}&=\mathbb{C}\langle\mathbf{a}^{\epsilon_1k_1}\otimes\mathbf{a}^{\epsilon_2k_2}\rangle\oplus
\bigoplus\limits_{i=1}^{q-1}\mathbb{C}\langle\mathbf{f}^+(i),\mathbf{f}^-(i)\rangle, \\
X_{[\mathbf{a}^k]}&\cong A_k^{\epsilon_1u_1+\epsilon_2u_2}\oplus\bigoplus\limits_{u\in\mathbb{G}}\oplus^4A_k^{u},
\end{align}
the isomorphism determined by
\begin{align}
\mathbf{a}^{\epsilon_1k_1}\otimes\mathbf{a}^{\epsilon_2k_2}&\mapsto \mathbf{a}^k\in A_k^{\epsilon_1u_1+\epsilon_2u_2}, \\
\mathbf{f}^+(i)&\mapsto\bigoplus\limits_{u\in\mathbb{G}}
\frac{\zeta_{2(q-1)}^{iu}}{\eta^{\mathbf{a},+}_{1}(2h+k+i)\eta^{\mathbf{a},+}_{2}(i)}(\mathbf{z}(i),\mathbf{0}), \\
\mathbf{f}^-(i)&\mapsto\bigoplus\limits_{u\in\mathbb{G}}
\frac{\zeta_{2(q-1)}^{iu}}{\eta^{\mathbf{a},-}_{1}(2h+k+i)\eta^{\mathbf{a},-}_{2}(i)}(\mathbf{0},\mathbf{z}(i)).
\end{align}
The braiding is
\begin{align}
R_{[\mathbf{a}^k]}=\zeta_{2h}^{\epsilon_1k_1u_2}\oplus
\theta_{X_2}^{-1}\cdot\bigoplus\limits_{u\in\mathbb{G}}\zeta_{2(q-1)}^{(k+2h)u}\oplus^2J^{k}.
\end{align}

If $\mathbf{x}_j=\mu\mathbf{b}_\varepsilon$, then $\mathbf{x}_{\overline{j}}=\mathbf{a}^{k_\mu}$, and
\begin{align}
X_{\mathbf{a}^k}&=\bigoplus\limits_{i\in\mathbb{H}}\mathbb{C}\langle\mathbf{f}^+(i),\mathbf{f}^-(i)\rangle,
\end{align}
Thus
\begin{align}
X_{[\mathbf{a}^k]}&\cong \bigoplus\limits_{u\in\mathbb{G}}\oplus^2A_k^{u},
\end{align}
the isomorphism determined by
\begin{align}
\mathbf{f}^\psi(i)\mapsto\bigoplus\limits_{u\in\mathbb{G}}
\frac{\zeta_{2(q-1)}^{iu}}{\eta^{\mathbf{a},\psi}_1(2h+k+i)\eta^{\mathbf{a},\psi}_{2}(i)}\cdot\mathbf{z}(\psi).
\end{align}
The braiding is
\begin{align}
R_{[\mathbf{a}^k]}=\theta_{X_2}^{-1}\cdot\bigoplus\limits_{u\in\mathbb{G}}\zeta_{2(q-1)}^{(k+2h)u}I.
\end{align}

\subsection{$X_{[\mu\mathbf{b}_{\varepsilon}]}$, $\mu,\varepsilon\in\{+,-\}$}

\subsubsection{$t_1\ne \mu t_2$}

Let
\begin{align}
\mathbf{g}_j&=\left(\begin{array}{cc}  0 & e^{\acute{\varepsilon}}/(\mu t_{\overline{j}}-t_j) \\
(t_j-\mu t_{\overline{j}})e^{-\acute{\varepsilon}} & t_j \\ \end{array}\right), \qquad j=1,2,  \\
\mathbf{g}(x)&=\mathbf{b}(x+c).\mathbf{g}_1\otimes\mathbf{b}(x).\mathbf{g}_2, \qquad \text{with} \qquad
c=\frac{e^{\acute{\varepsilon}}t_1}{t_1-\mu t_2}.
\end{align}
By (\ref{eq:[b]}), $X_{\mu\mathbf{b}_{\varepsilon}}\ne 0$ requires
\begin{align}
(\mu t_{\overline{j}}-t_j)e^{\acute{\varepsilon}+\acute{\varepsilon_j}}\in\mathbb{F}_q^{\times2} \qquad \text{if} \qquad X_j=B_{\mu_j,\varepsilon_j}^{\nu_j,v_j}.
\end{align}
When this holds, by Lemma \ref{lem:multiply} (ii),
\begin{align}
X_{\mu\mathbf{b}_{\varepsilon}}=\bigoplus\limits_{x\in\mathbb{F}_q}\mathbf{g}(x).
\end{align}
Suppose $\mathbf{b}(x):\mathbb{C}\langle\mathbf{g}_j\rangle\to\mathbb{C}\langle\mathbf{b}(x).\mathbf{g}_j\rangle$ is given by $\mathbf{g}_j\mapsto\eta^{\mathbf{b}}_{j}(x)\cdot\mathbf{b}(x).\mathbf{g}_j$.

We have an isomorphism
\begin{align}
X_{[\mu\mathbf{b}_{\varepsilon}]}&\cong\bigoplus\limits_{v\in\mathbb{F}_q}B_{\mu,\varepsilon}^{\nu,v}, \\
\mathbf{g}(x)&\mapsto\bigoplus\limits_{v\in\mathbb{F}_q}
\frac{\zeta_p^{{\rm tr}(vx)}}{\eta^{\mathbf{b}}_1(x+c)\eta^{\mathbf{b}}_2(x)}.  \label{eq:iso-B1}
\end{align}

To compute the braiding, note that $\mathbf{b}(x+c).\mathbf{g}_1=\mu\mathbf{b}_{\varepsilon}(\mathbf{b}(x).\mathbf{g}_2)^{-1}$, so that
$$R_{[\mu\mathbf{b}_\varepsilon]}(\mathbf{g}(x))=
\frac{\mu^{\acute{\nu_2}}}{\theta_{X_2}}\frac{\eta^{\mathbf{b}}_2(e^{\acute{\varepsilon}}+x)}{\eta^{\mathbf{b}}_2(x)}
\mathbf{b}(e^{\acute{\varepsilon}}+x).\mathbf{g}_2\otimes\mathbf{b}(x+c).\mathbf{g}_1,$$
which is sent by (\ref{eq:iso-B1}) to
$$\frac{\mu^{\acute{\nu_2}}}{\theta_{X_2}}\cdot\bigoplus\limits_{v\in\mathbb{F}_q}
\frac{\zeta_p^{{\rm tr}(v(x-c))}}{\eta_1^{\mathbf{b}}(x+c)\eta_2^{\mathbf{b}}(x)}.$$
Hence
\begin{align}
R_{[\mu\mathbf{b}_\varepsilon]}=\frac{\mu^{\acute{\nu_2}}}{\theta_{X_2}}\bigoplus\limits_{v\in\mathbb{F}_q}
\zeta_p^{{\rm tr}(\frac{e^{\acute{\varepsilon}}t_1v}{t_1-\mu t_2})}.
\end{align}

\subsubsection{$t_1=\mu t_2\ne\pm 2$}

By Lemma \ref{lem:multiply} (ii), $\mathbf{y}_1,\mathbf{y}_2$ are upper-triangular, so that $a_j=e^{k_j}$ and $X_j=A_k^{u_j}$, $j=1,2$, with $k_2=k_1(\mu)$.
For $\psi\in\{+,-\}$, put
\begin{align}
\mathbf{g}^{\psi}(x)=\mathbf{b}(x+c').\mathbf{a}^{\overline{\psi}k_1}\otimes\mathbf{b}(x).\mathbf{a}^{\psi k_2}, \qquad
\text{with} \qquad
c'=\frac{e^{\acute{\varepsilon}+k_2}}{e^{k_2}-e^{-k_2}}.
\end{align}
Then
\begin{align}
X_{\mu\mathbf{b}_\varepsilon}=\bigoplus\limits_{x\in\mathbb{F}_q}\mathbb{C}\langle\mathbf{g}^+(x)\mathbf{g}^-(x)\rangle,
\end{align}
Suppose $\mathbf{b}(x):\mathbb{C}\langle\mathbf{a}^{\psi k_j}\rangle\to\mathbb{C}\langle\mathbf{b}(x).\mathbf{a}^{\psi k_j}\rangle$
is given by $\mathbf{a}^{\psi k_j}\mapsto\eta^{\mathbf{b},\psi}_j(x)\cdot\mathbf{b}(x).\mathbf{a}^{\psi k_j}$.

We have
\begin{align}
X_{[\mu\mathbf{b}_\varepsilon]}\cong\bigoplus\limits_{v\in\mathbb{F}_q}\oplus^2B_{\mu,\varepsilon}^{\nu,v}, \qquad
\mathbf{g}^{\psi}(x)\mapsto\bigoplus\limits_{v\in\mathbb{F}_q}\frac{\zeta_p^{{\rm tr}(vx)}}{\eta^{\mathbf{b},\overline{\psi}}_1(x+c')\eta^{\mathbf{b},\psi}_2(x)}\mathbf{z}(\psi). \label{eq:iso-B2}
\end{align}
Since $\mathbf{b}(x+c').\mathbf{a}^{\overline{\psi}k_1}=\mu\mathbf{b}_\varepsilon(\mathbf{b}(x).\mathbf{a}^{\psi k_2})^{-1}$,
\begin{align*}
R_{[\mu\mathbf{b}_\varepsilon]}(\mathbf{g}^\psi(x))&=
\frac{\mu^{\acute{\nu_2}}}{\theta_{X_2}}\frac{\eta_{2}^{\mathbf{b},\psi}(e^{\acute{\varepsilon}}+x)}{\eta_{2}^{\mathbf{b},\psi}(x)}\cdot
\mathbf{b}(e^{\acute{\varepsilon}}+x).a^{\psi k_2}\otimes\mathbf{b}(x+c').\mathbf{a}^{\overline{\psi} k_1},
\end{align*}
which is sent by (\ref{eq:iso-B2}) to
$$\frac{\mu^{\acute{\nu_2}}}{\theta_{X_2}}\cdot\bigoplus\limits_{v\in\mathbb{F}_q}
\frac{\zeta_p^{{\rm tr}(v(x+c'))}}{\eta^{\mathbf{b},\overline{\psi}}_{1}(x+c')\eta^{\mathbf{b},\psi}_{2}(x)}\cdot\mathbf{z}(\overline{\psi}).$$
Hence the braiding is
\begin{align}
R_{[\mu\mathbf{b}_\varepsilon]}=\bigoplus\limits_{v\in\mathbb{F}_q}\zeta_p^{{\rm tr}(\frac{e^{\acute{\varepsilon}+k_2}v}{e^{k_2}-e^{-k_2}})}J.
\end{align}

\subsubsection{$t_1=\mu t_2\in\{\pm 2\}$}

Now $t_j=2\mu_j$ and $X_j=B_{\mu_j,\varepsilon_j}^{\nu_j,v_j}$ for $j=1,2$, with $\mu_1\mu_2=\mu$, $\varepsilon_1\varepsilon_2=\varepsilon$.
By (\ref{eq:[b]}) and Lemma \ref{lem:multiply} (ii),
\begin{align*}
X_{\mu\mathbf{b}_\varepsilon}=\bigoplus\limits_{i\in\mathbb{H}'}
\mathbb{C}\langle\mu_1\mathbf{b}(e^{\acute{\varepsilon}}-e^{2i+\acute{\varepsilon_2}})\rangle\otimes
\mathbb{C}\langle\mu_2\mathbf{b}(e^{2i+\acute{\varepsilon_2}})\rangle,
\end{align*}
where $\mathbb{H}'$ consists of $i\in\{1,\ldots,2\overline{q}\}$ with $(e^{\acute{\varepsilon}}-e^{2i+\acute{\varepsilon_2}})e^{-\acute{\varepsilon_1}}\in\mathbb{F}_q^{\times2}$;
for each $i\in\mathbb{H}'$, write $e^{\acute{\varepsilon}}-e^{2i+\acute{\varepsilon_2}}=e^{2\hat{i}+\acute{\varepsilon_1}}$.

Then
\begin{align}
&X_{[\mu\mathbf{b}_\varepsilon]}\cong\bigoplus\limits_{d\in\mathbb{H}'}B_{\mu,\varepsilon}^{\nu,e^{-2\hat{d}}v_1+e^{-2d}v_2}, \\
&\mu_1\mathbf{b}(e^{2\hat{i}+\acute{\varepsilon_1}})\otimes\mu_2\mathbf{b}(e^{2i+\acute{\varepsilon_2}})
\mapsto \bigoplus\limits_{d\in\mathbb{H}'}\delta_{d,i}. \label{eq:iso-B3}
\end{align}
For the braiding, note that $\mu_1\mathbf{b}(e^{2\hat{i}+\acute{\varepsilon_1}})=(\mu\mathbf{b}_\varepsilon)(\mu_2\mathbf{b}(e^{2i+\acute{\varepsilon_2}}))^{-1}$, so
\begin{align*}
R_{[\mu\mathbf{b}_\varepsilon]}(\mu_1\mathbf{b}(e^{2\hat{i}+\acute{\varepsilon_1}})\otimes\mu_2\mathbf{b}(e^{2i+\acute{\varepsilon_2}}))
&=\frac{\mu^{\acute{\nu_2}}\zeta_p^{{\rm tr}(e^{\acute{\varepsilon}-2i}v_2)}}{\theta_{X_2}}
\mu_2\mathbf{b}(e^{2i+\acute{\varepsilon_2}})\otimes\mu_1\mathbf{b}(e^{2\hat{i}+\acute{\varepsilon_1}}) \\
&\stackrel{(\ref{eq:iso-B3})}\mapsto
\frac{\mu^{\acute{\nu_2}}}{\theta_{X_2}}\zeta_p^{{\rm tr}(e^{\acute{\varepsilon}-2i}v_2)}\bigoplus\limits_{d\in\mathbb{H}'}\delta_{d,\hat{i}}.
\end{align*}
Thus
\begin{align}
&R_{[\mu\mathbf{b}_\varepsilon]}=\bigoplus\limits_{d\in\mathbb{H}'}
\frac{\mu^{\acute{\nu_2}}}{\theta_{X_2}}\zeta_p^{{\rm tr}(e^{\acute{\varepsilon}-2d}v_2)}J(d,\hat{d}), \\
\text{with} \qquad &J(d,\hat{d}):B_{\mu,\varepsilon}^{\nu,e^{-2\hat{d}}v_1+e^{-2d}v_2}\to B_{\mu,\varepsilon}^{\nu,e^{-2d}v_2+e^{-2\hat{d}}v_1}.
\end{align}

\subsection{$X_{[\mathbf{c}^\ell]}$}

Let $\mathbb{I}$ denote the set of $i\in[1,q+1]$ with $i\equiv \acute{\nu}\pmod{2}$.

\subsubsection{$f^\ell\ne a_1^{\epsilon_1}a_2^{\epsilon_2}$}

Fix $x_0\in\mathbb{F}_{q^2}$ with $\mathcal{N}(x_0)=\Delta(t,t_1,t_2)e^{-1}$.

For $i\in\{1,\ldots,q+1\}$, put
\begin{align}
\mathbf{h}_j(i)=\mathbf{k}.\left(\frac{1}{f^\ell-f^{-\ell}}\left(\begin{array}{cc} f^\ell t_j-t_{\overline{j}} & x_0f^i \\ e\overline{x_0}f^{-i} & t_{\overline{j}}-f^{-\ell}t_j \end{array}\right)\right), \qquad j=1,2,
\end{align}
and let
\begin{align}
\mathbf{h}(i)=\mathbf{h}_1(2h+1+\ell+i)\otimes\mathbf{h}_2(i).
\end{align}
Suppose $\tilde{\mathbf{c}}^i:\mathbb{C}\langle\mathbf{h}_j(0)\rangle\to\mathbb{C}\langle\mathbf{h}_j(i)\rangle$ is given by $\mathbf{h}_j(0)\mapsto\eta^{\mathbf{c}}_{j}(i)\cdot\mathbf{h}_j(i)$.

If $\mathbf{x}_j\ne\mu\mathbf{b}_{\varepsilon}$, $j=1,2$, then by Lemma \ref{lem:multiply} (i) and Lemma \ref{lem:real},
\begin{align*}
X_{\mathbf{c}^\ell}=\bigoplus\limits_{i=1}^{q+1}\mathbb{C}\langle\mathbf{h}(i)\rangle,
\end{align*}
hence
\begin{align}
X_{[\mathbf{c}^\ell]}\cong\bigoplus\limits_{w\in\mathbb{I}}\oplus^2C_{\ell}^w,  \qquad \mathbf{h}(i)\mapsto\bigoplus\limits_{w\in\mathbb{I}}\vartheta_w(i)\mathbf{z}(i),
\end{align}
with
\begin{align}
\vartheta_w(i)=\frac{\zeta_{2(q+1)}^{iw}}{\eta^{\mathbf{c}}_{1}(2h+1+\ell+i)\eta^{\mathbf{c}}_{2}(i)}.
\end{align}
The braiding is
\begin{align}
R_{[\mathbf{c}^\ell]}=\theta_{X_2}^{-1}\cdot\bigoplus\limits_{w\in\mathbb{I}}\zeta_{2(q+1)}^{(\ell+2h+1)w}J^{\ell+1}.
\end{align}

If $X_j=B_{\mu_j,\varepsilon_j}^{\nu_j,v_j}$ for at least one $j$, then $x_0$ can chose to be $(t-\mu_jt_{\overline{}j})\tilde{e}^{-1}$. By (\ref{eq:k.u}), $\mathbf{h}_j(i)\in[\mu_j\mathbf{b}_{\varepsilon_j}]$ if and only if
$$\frac{(t-\mu_jt_{\overline{j}})e^{\acute{\varepsilon_j}}}{2\tilde{e}(f^\ell-f^{-\ell})}(\tilde{f}^{i}+\tilde{f}^{-i})^2\in\mathbf{F}_q^{\times2};$$
let $\mathbb{K}$ denote the set of $i\in[1,q+1]$ determined by $\mathbf{h}_1(2h+1+\ell+i)\in[\mu_1\mathbf{b}_{\varepsilon_1}]$ (resp. $\mathbf{h}_2(i)\in[\mu_2\mathbf{b}_{\varepsilon_2}]$) if $j=1$ (resp. $j=2$). Then
\begin{align*}
X_{\mathbf{c}^\ell}=\bigoplus\limits_{i\in\mathbb{K}}\mathbb{C}\langle\mathbf{h}(i)\rangle,
\end{align*}
hence
\begin{align}
X_{[\mathbf{c}^\ell]}\cong\bigoplus\limits_{w\in\mathbb{I}}C_{\ell}^{w}, \qquad \mathbf{h}(i)\mapsto\bigoplus\limits_{w\in\mathbb{I}}
\frac{\zeta_{2(q+1)}^{iw}}{\eta^{\mathbf{c}}_{1}(2h+1+\ell+i)\eta^{\mathbf{c}}_{2}(i)}.
\end{align}
The braiding is
\begin{align}
R_{[\mathbf{c}^\ell]}=\theta_{X_2}^{-1}\cdot\bigoplus\limits_{w\in\mathbb{I}}\zeta_{2(q+1)}^{(\ell+2h+1)w}{\rm id}.
\end{align}

\subsubsection{$f^\ell=a_1^{\epsilon_1}a_2^{\epsilon_2}$ with $\epsilon_1,\epsilon_2\in\{\pm 1\}$}

By Lemma \ref{lem:multiply} (i) and Lemma \ref{lem:real}, $\tilde{\mathbf{y}}_1,\tilde{\mathbf{y}}_2$ are diagonal, so $a_j=f^{\ell_j}$, $\ell_j\in[1,2h]$, and $X_j=C_{\ell_j}^{w_j}$, $j=1,2$, with $\epsilon_1\ell_1+\epsilon_2\ell_2\equiv\ell\pmod{q+1}$.
Now
\begin{align*}
X_{\mathbf{c}^\ell}=\mathbb{C}\langle\mathbf{c}^{\epsilon_1\ell_1}\rangle\otimes\mathbb{C}\langle\mathbf{c}^{\epsilon_2\ell_2}\rangle,
\end{align*}
and we have
\begin{align}
X_{[\mathbf{c}^\ell]}\cong C_{\ell}^{\epsilon_1w_1+\epsilon_2w_2}, \qquad \mathbf{c}^{\epsilon_1\ell_1}\otimes\mathbf{c}^{\epsilon_2\ell_2}\mapsto \mathbf{c}^\ell.
\end{align}
Obviously,
\begin{align}
R_{[\mathbf{c}^\ell]}=\zeta_{q+1}^{\epsilon_1\epsilon_2\ell_1w_2}{\rm id}.
\end{align}

\subsection{$X_{[\mu\mathbf{e}]}$}

Due to that each element is conjugate to its inverse, the non-triviality of $X_{[\mu\mathbf{e}]}$ requires $\mathbf{x}_2=\mu\mathbf{x}_1$.

Suppose
\begin{align*}
X_{[\mu\mathbf{e}]}\cong
n_0\mathbf{1}\oplus m_0V\oplus\bigoplus\limits_{\sigma=1}^{2h-1}n_\sigma W_{\sigma}\oplus
\bigoplus\limits_{\phi=1}^{2h}m_\phi X_{\phi}\oplus n'W'\oplus n''W''\oplus m'X'\oplus m''X''.
\end{align*}
Evaluating at $\pm\mathbf{e}$,  $\pm\mathbf{b}_{\varepsilon}$, $\mathbf{a}^{k'}$ ($k'\in[1,2h-1]$), $\mathbf{c}^{\ell}$ ($\ell\in[1,2h]$) gives respectively
\begin{align*}
n_0+qm_0+(q+1)\sum\limits_{\sigma=1}^{2h-1}n_\sigma(\pm 1)^{\sigma}+(q-1)\sum\limits_{\phi=1}^{2h}m_\phi(\pm1)^{\phi}+(2h+1)(n'+n'') \\
\pm 2h(m'+m'')={\rm tr}_1, \\
n_0+\sum\limits_{\sigma=1}^{2h-1}n_\sigma(\pm1)^\sigma-\sum\limits_{\phi=1}^{2h}m_\phi(\pm1)^\phi
+s_{\varepsilon}n'+s_{-\varepsilon}n''\mp(s_{\varepsilon}m'+s_{-\varepsilon}m'')={\rm tr}_2, \\
n_0+m_0+\sum\limits_{\sigma=1}^{2h-1}(\zeta_{q-1}^{\sigma k'}+\zeta_{q-1}^{-\sigma k'})n_\sigma+(-1)^{k'}(n'+n'')={\rm tr}_3, \\
n_0-m_0-\sum\limits_{\phi=1}^{2h}(\zeta_{q+1}^{\phi\ell}+\zeta_{q+1}^{-\phi\ell})m_\phi-(-1)^{\ell}(m'+m'')={\rm tr}_4,
\end{align*}
with ${\rm tr}_1,{\rm tr}_2,{\rm tr}_3,{\rm tr}_4$ depending on $X_1,X_2$.

\subsubsection{$X_1=A_{k}^{\mu u+d}, X_2=A_{k_\mu}^{u}$}

No representative of a nontrivial conjugacy class fixes any component, except for $\mathbf{a}^{k'}$, which fixes $\mathbb{C}\langle\mathbf{a}^k\otimes\mathbf{a}^{2h\acute{\mu}-k}\rangle$ and $\mathbb{C}\langle\mathbf{a}^{-k}\otimes\mathbf{a}^{k-2h\acute{\mu}}\rangle$, giving trace $\zeta_{q-1}^{k'd}+\zeta_{q-1}^{-k'd}$.

$${\rm tr}_1=(\pm 1)^{d}q(q+1), \qquad {\rm tr}_2={\rm tr}_4=0, \qquad {\rm tr}_3=\zeta_{q-1}^{k'd}+\zeta_{q-1}^{-k'd}.$$

If $\nu=+$, i.e., $2\mid d$, then we solve out
\begin{align}
&n'=n''=1+\delta^{2h}_{|d|}, && n_0=\delta_{d}^{0}, && n_\sigma=2\delta_{2\mid\sigma}+\delta_{|d|}^\sigma,  \\
&m'=m''=0, && m_0=2+\delta_{d}^{0}, && m_\phi=2\delta_{2\mid\phi}.
\end{align}
When $d=0$, we have an isomorphism
\begin{align*}
X_{[\mu\mathbf{e}]}\cong \mathbf{1}\oplus\oplus^3V\oplus\bigoplus\limits_{\sigma'=1}^{h-1}\oplus^2W_{2\sigma'}\oplus\bigoplus\limits_{\phi'=1}^{h}\oplus^2X_{2\phi'}\oplus W'\oplus W'', \\
\mathbf{a}^k\otimes\mathbf{a}^{2h\acute{\mu}-k}\mapsto 1\oplus(\xi^{\mathbf{a}}_{1},\xi^{\mathbf{a}}_{2},\xi^{\mathbf{a}}_{3})\oplus\bigoplus\limits_{\sigma'=1}^{h-1}(\xi^{\mathbf{a}}_{1},\xi^{\mathbf{a}}_{2})
\oplus\bigoplus\limits_{\phi'=1}^{h}(\xi^{\mathbf{a}}_{1},\xi^{\mathbf{a}}_{2})\oplus\xi^{\mathbf{a}}\oplus\xi^{\mathbf{a}},
\end{align*}
and then use this to find the braiding:
\begin{align}
R_{[\mu\mathbf{e}]}=\zeta_{q-1}^{\overline{\mu}ku}\left(1\oplus(J^{\acute{\overline{\mu}}}\oplus 1)\oplus\bigoplus\limits_{\sigma'=1}^{h-1}\overline{\mu}^{h}I\oplus\bigoplus\limits_{\phi'=1}^{h}
J^{\acute{\overline{\mu}}-1}\oplus\overline{\mu}^{h}\oplus\overline{\mu}^{h}\right).
\end{align}
When $|d|=2h$, we have an isomorphism
\begin{align*}
X_{[\mu\mathbf{e}]}\cong\oplus^2V\oplus\bigoplus\limits_{\sigma'=1}^{h-1}\oplus^2W_{2\sigma'}\oplus\bigoplus\limits_{\phi'=1}^{h}\oplus^2X_{2\phi'}\oplus\oplus^2W'\oplus\oplus^2W'', \\
\mathbf{a}^k\otimes\mathbf{a}^{2h\acute{\mu}-k}\mapsto (\xi^{\mathbf{a}}_1,\xi^{\mathbf{a}}_2)\oplus\bigoplus\limits_{\sigma'=1}^{h-1}(\xi^{\mathbf{a}}_1,\xi^{\mathbf{a}}_2)
\oplus\bigoplus\limits_{\phi'=1}^{h}(\xi^{\mathbf{a}}_1,\xi^{\mathbf{a}}_2)\oplus(\xi^{\mathbf{a}}_1,\xi^{\mathbf{a}}_2)\oplus(\xi^{\mathbf{a}}_1,\xi^{\mathbf{a}}_2),
\end{align*}
and then use this to find the braiding:
\begin{align}
R_{[\mu\mathbf{e}]}=\zeta_{q-1}^{\overline{\mu}ku}\left(J^{\acute{\overline{\mu}}}\oplus \bigoplus\limits_{\sigma'=1}^{h-1}\overline{\mu}^{h}I\oplus\bigoplus\limits_{\phi'=1}^{h}
J^{\acute{\overline{\mu}}}\oplus\overline{\mu}^{h}I\oplus\overline{\mu}^{h}I\right).
\end{align}
In the remaining case, $|d|\ne 0,h$,
$$X_{[\mu\mathbf{e}]}\cong\oplus^2V\oplus\oplus^3W_{|d|}\oplus\bigoplus\limits_{\sigma'\in[1,h-1]\atop 2\sigma'\ne |d|}\oplus^2W_{2\sigma'}\oplus\bigoplus\limits_{\phi'=1}^{h}\oplus^2X_{2\phi'}\oplus W'\oplus W'',$$
determined similarly, and the braiding is
\begin{align}
R_{[\mu\mathbf{e}]}=\zeta_{q-1}^{\overline{\mu}ku}\cdot{\rm id}.
\end{align}

If $\nu=-$, i.e., $2\nmid d$, then we can solve out
\begin{align}
&n'=n''=0, &&n_0=0, &&n_\sigma=2\delta_{2\nmid\sigma}+\delta_{|d|}^\sigma, \\
&m'=m''=1, &&m_0=0, &&m_\phi=2\delta_{2\nmid\phi},
\end{align}
and the isomorphism
$$X_{[\mu\mathbf{e}]}\cong\oplus^3W_{|d|}\oplus\bigoplus\limits_{\sigma'\in[1,h-1]\atop 2\sigma'-1\ne|d|}\oplus^2W_{2\sigma'-1}\oplus\bigoplus\limits_{\phi'=1}^{h}\oplus^2X_{2\phi'-1}\oplus X'\oplus X''$$
can be determined similarly, and the braiding is also
\begin{align}
R_{[\mu\mathbf{e}]}=\zeta_{q-1}^{\overline{\mu}ku}\cdot{\rm id}.
\end{align}

\subsubsection{$X_1=B_{\mu_1,\varepsilon}^{\nu_1,v_1}, X_2=B_{\mu_2,\varepsilon}^{\nu_2,v_2}$, with $\mu_1\mu_2=\mu$}

\begin{align*}
{\rm tr}_1=(\pm 1)^{\nu_1+\nu_2}\frac{1}{2}(q^2-1), \qquad
{\rm tr}_2=(\pm 1)^{\nu_1+\nu_2}g_\varepsilon, \qquad
{\rm tr}_3={\rm tr}_4=0.
\end{align*}
Here
$$g_{\mu}=\sum\limits_{k=1}^{2h}\zeta_p^{{\rm tr}(e^{2k+\acute{\mu}}(v_1-v_2))},$$
and the second equation follows from that $\mathbf{b}_\varepsilon$ acts on $\mathbb{C}\langle\tilde{\mathbf{a}}^{2k}\rangle\otimes\mathbb{C}\langle\tilde{\mathbf{a}}^{2k+2h}\rangle$ as multiplication by $\zeta_p^{{\rm tr}(e^{\acute{\varepsilon}-2k}(v_1-v_2))}$.

Suppose $v_1=v_2$, then $g_{\pm}=2h$. If $\nu_1=\nu_2$, then we can solve out
\begin{align}
n_0=m_0=n'=n''=1, \qquad n_\sigma=2\delta_{2\mid\sigma}, \qquad m_\phi=m'=m''=0,
\end{align}
Note that $\mathbf{b}_\varepsilon^{-1}=\mathbf{a}^{h}.\mathbf{b}_\varepsilon$, instead of using $\mathbf{j}$.
So
\begin{align}
R_{[\mu\mathbf{e}]}=\mu_2\nu_2\zeta_p^{-{\rm tr}(e^{\acute{\varepsilon}} v_2)}\left(1\oplus1\oplus\bigoplus\limits_{\sigma'=1}^{h}(-1)^{h}I\oplus
(-1)^{h}\oplus(-1)^{h}.\right)
\end{align}
If $\nu_1\ne\nu_2$, then
\begin{align}
n_0=m_0=n'=n''=m'=m''=m_\phi=0, \qquad n_\sigma=2\delta_{2\nmid\sigma},
\end{align}
and \begin{align}
R_{[\mu\mathbf{e}]}=\mu_2\nu_2\zeta_p^{-{\rm tr}(e^{\acute{\varepsilon}}v_2)}\bigoplus\limits_{\sigma'=1}^{h}(-1)^{h}I.
\end{align}

Now suppose $v_1\ne v_2$.
If $\nu_1=\nu_2$, then we can solve out
\begin{align}
&n_0=0, && n_\sigma=\delta_{2\mid\sigma}, && (n',n'')=\begin{cases} (1,0), &g_+=s_+-1, \\ (0,1), &g_+=s_--1. \end{cases}  \\
&m_0=1, && m_{\phi}=\delta_{2\mid\phi}, && m'=m''=0.
\end{align}
The braiding is
\begin{align}
R_{[\mu\mathbf{e}]}=\mu_2\nu_2\zeta_p^{-{\rm tr}(e^{\acute{\varepsilon}}v_2)}{\rm id}.
\end{align}
If $\nu_1\ne\nu_2$, then
\begin{align}
&n_0=0, && n_\sigma=\delta_{2\nmid\sigma}, && n'=n''=0, \\
&m_0=0, && m_{\phi}=\delta_{2\nmid\phi}, && (m',m'')=\begin{cases} (1,0), &g_+=s_+-1, \\ (0,1), &g_+=s_--1. \end{cases}
\end{align}
The braiding is similar.

\subsubsection{$X_1=C_{\ell}^{\mu w+d}$, $X_2=C_{\ell^\mu}^{w}$}

\begin{align*}
{\rm tr}_1=(\pm 1)^{d}q(q-1), \qquad  {\rm tr}_2={\rm tr}_3=0, \qquad {\rm tr}_4=\zeta_{q+1}^{\ell'd}+\zeta_{q+1}^{-\ell'd}.
\end{align*}

If $\nu=+$, i.e., $2\mid w_1-w_2$, then we can solve out
\begin{align}
&n'=n''=1, &&n_0=\delta_{d}^{0}, && n_\sigma=2\delta_{2\mid\sigma}, \\
&m'=m''=0, &&m_0=2-\delta_{d}^{0}, &&m_\phi=2\delta_{2\mid\phi}-\delta_{|d|}^\phi.
\end{align}

When $d=0$, we have
\begin{align*}
&X_{[\mu\mathbf{e}]}\cong\mathbf{1}\oplus V\oplus\bigoplus\limits_{\sigma'=1}^{h-1}\oplus^2W_{2\sigma'}\oplus\bigoplus\limits_{\phi'=1}^{h}\oplus^2X_{2\phi'}\oplus W'\oplus W'', \\
&\mathbf{c}^\ell\otimes\mathbf{c}^{(2h+1)\acute{\mu}-\ell}\mapsto\xi^{\mathbf{c}}\oplus\xi^{\mathbf{c}}\oplus\bigoplus\limits_{\sigma=1}^{h-1}(\xi^{\mathbf{c}}_1\oplus\xi^{\mathbf{c}}_2)\oplus
\bigoplus\limits_{\phi=1}^{h}(\xi^{\mathbf{c}}_1\oplus\xi^{\mathbf{c}}_2)\oplus \xi^{\mathbf{c}}\oplus \xi^{\mathbf{c}},
\end{align*}
and the braiding is
\begin{align}
R_{[\mu\mathbf{e}]}=\zeta_{q+1}^{\overline{\mu}\ell w}\left(1\oplus 1\oplus\bigoplus\limits_{\sigma'=1}^{h-1}\overline{\mu}^{h}I\oplus
\bigoplus\limits_{\phi=1}^{h}J^{\acute{\overline{\mu}}}\oplus\overline{\mu}^{h}\oplus\overline{\mu}^{h}\right).
\end{align}
When $d\ne 0$, the isomorphism can be determined similarly, and $R_{[\mu\mathbf{e}]}=\zeta_{q+1}^{\overline{\mu}\ell w}{\rm id}$.

If $\nu=-$, then we can solve out
\begin{align}
&n_0=0, &&n'=n''=0, &&n_{\sigma}=2\delta_{2\nmid\sigma}, \\
&m_0=0, &&m'=m'=1-\delta_{|d|}^{2h+1}, && m_\phi=2\delta_{2\nmid\phi}-\delta_{|d|}^{2h+1}.
\end{align}
When $|d|=2h+1$,
\begin{align*}
X_{[\mu\mathbf{e}]}\cong\bigoplus\limits_{\sigma'=1}^{h}\oplus^2W_{2\sigma'-1}\oplus\bigoplus\limits_{\phi'=1}^{h}\oplus^2X_{2\phi'-1}, \\
\mathbf{c}^\ell\otimes\mathbf{c}^{(2h+1)\acute{\mu}-\ell}\mapsto\bigoplus\limits_{\sigma'=1}^{h}(\xi^{\mathbf{c}}_1,\xi^{\mathbf{c}}_2)\oplus\bigoplus\limits_{\phi'=1}^{h}(\xi^{\mathbf{c}}_1,\xi^{\mathbf{c}}_2),
\end{align*}
and the braiding is
\begin{align}
R_{[\mu\mathbf{e}]}=\zeta_{q+1}^{\overline{\mu}\ell w}\left(\bigoplus\limits_{\sigma'=1}^{h}\overline{\mu}^{h}\oplus
\bigoplus\limits_{\phi'=1}^{h}J^{\acute{\overline{\mu}}}\right).
\end{align}
When $|d|\ne 2h+1$,
\begin{align*}
X_{[\mu\mathbf{e}]}\cong\bigoplus\limits_{\sigma'=1}^{h}\oplus^2W_{2\sigma'-1}\oplus X_{|d|}\oplus\bigoplus\limits_{\phi'\in[1,h]\atop 2\phi'-1\ne d}\oplus^2X_{2\phi'-1}\oplus X'\oplus X'',
\end{align*}
the isomorphism determined similarly, and the braiding is $R_{[\mu\mathbf{e}]}=\zeta_{q+1}^{\overline{\mu}\ell w}{\rm id}$.



\end{document}